\title{Frobenius morphisms of noncommutative blowups}
\author{Takehiko Yasuda}
\address{Department of Mathematics and Computer Science, 
Kagoshima University, 1-21-35 Korimoto, Kagoshima 890-0065, Japan}
\email{yasuda@sci.kagoshima-u.ac.jp}
\theoremstyle{plain}
\newtheorem{thm}{Theorem}[section]
\newtheorem{prop}[thm]{Proposition}
\newtheorem{cor}[thm]{Corollary}
\newtheorem{lem}[thm]{Lemma}
 \theoremstyle{definition}
\newtheorem{defn}[thm]{Definition}
\newtheorem{ex}[thm]{Example}
\theoremstyle{remark}
\newtheorem{rem}[thm]{Remark}
\def\AA{\mathbb A}
\def\Znn{\mathbb{Z}_{\ge 0}}
\def\cA{\mathcal{A}}
\def\cB{\mathcal{B}}
\def\cE{\mathcal{E}}
\def\cF{\mathcal{F}}
\def\cL{\mathcal{L}}
\def\cM{\mathcal{M}}
\def\cN{\mathcal{N}}
\def\cO{\mathcal{O}}
\def\bR{\mathbf{R}}
\def\11{\mathbf{1}}
\def\lmodules{\text{-}\mathbf{mod}}
\DeclareMathOperator{\Spec}{Spec}
\DeclareMathOperator{\Hom}{Hom}
\DeclareMathOperator{\cHom}{\mathcal{H}\mathit{om}}
\DeclareMathOperator{\DB}{DB}
\DeclareMathOperator{\NCB}{NCB}
\DeclareMathOperator{\End}{End}
\DeclareMathOperator{\cEnd}{\mathcal{E}\mathit{nd}}
\DeclareMathOperator{\Qcoh}{Qcoh}
\DeclareMathOperator{\Coh}{Coh}
\def\PS{\mathcal{PS}}
\def\ps{\mathrm{ps}}
\begin{document}

\maketitle

\begin{abstract}
We define the Frobenius morphism of certain class of noncommutative blowups
in positive characteristic.
Thanks to a nice property of the class, the defined morphism is flat.
Therefore we say that the noncommutative
blowups in this class are Kunz regular. One of such blowups is the one associated to 
a regular Galois alteration. As a consequence of de Jong's theorem, we see that
for every variety over an algebraically closed field of positive characteristic, 
there exists a noncommutative blowup which is Kunz regular. 
We also see that  a variety with F-pure and FFRT (finite F-representation type) singularities
has a  Kunz regular noncommutative blowup which is associated to
 an iteration of the Frobenius morphism of the variety. 
\end{abstract}

\section{Introduction}\label{sec-intro}

The Frobenius morphism is arguably the most important notion in the algebraic geometry
of positive characteristic and used almost everywhere.
Concerning the singularity theory, 
Kunz's theorem is classical \cite{MR0252389}: A scheme is regular if and only if its Frobenius morphism is flat. The main aim of this article is to define the Frobenius morphism
of certain class of noncommutative blowups in positive characteristic
and to see that the defined morphism is flat. 
Here the noncommutative blowup that we mean 
is basically the same as the noncommutative crepant resolution 
in  \cite{MR2077594} and 
the noncommutative desingularization in \cite{MR1957019}
except that we remove some assumptions, especially 
the finiteness of global dimension. 

Let $k$ be a field of characteristic $p > 0$.
Recently it was found in \cite{Yasuda:arXiv:0810.1804} that if $X=\Spec R$ is from some classes of singularities over $k$, then for sufficiently large $e$,
the endomorphism ring $\End_R (R^{1/p^e})$
, whose elements are differential operators on $R^{1/p^e}$, has finite global dimension
and is regarded as a noncommutative resolution of $X$. 
This article derives from the author's attempt to know where the regularity
of $\End_R (R^{1/p^e})$ comes from
and to show its regularity for a broader class of singularities. 
However the regularity which we will consider in this article is the flatness of Frobenius 
rather than the finiteness of global dimension. It is because 
the former seems to the author simpler and compatible with $\End_R (R^{1/p^e})$.
We however consider not only noncommutative blowups of the form $\End_R (R^{1/p^e})$.

Let $X,Y$ be integral normal Noetherian schemes over $k$ 
with finite Frobenius morphisms and $f:Y \to X$ a finite dominant morphism. 
We associate to $f$  a noncommutative blowup $\NCB(Y/X)$, which 
is the pair of the endomorphism ring $\cEnd_{\cO_X}(\cO_Y)$ 
and the left $\cEnd_{\cO_X}(\cO_Y)$-module $\cO_Y$.
(More generally we will consider the noncommutative blowup associated to a coherent sheaf. However the examples in which we are interested are associated to finite morphisms of schemes.)
 Also we regard this
as the category of left $\cEnd_{\cO_X}(\cO_Y)$-modules with 
the distinguished object $\cO_Y$.
Let $Y_e \to Y$ be the $e$-times
iteration of the Frobenius morphism.
We say that $f$ is \emph{F-steady} if for every $e \ge 0$, the structure sheaves of $Y_e$
and $Y$ locally have, as $\cO_X$-modules, the same summands (for details, see Section \ref{sec-Frobenius}). For instance, if $Y$ is regular, then $f$ is F-steady.
Given an F-steady morphism $Y \to X$, we
 define the Frobenius morphism of $\NCB(Y/X)$, which is flat by construction. 
Hence we say that $\NCB(Y/X)$ is \emph{Kunz regular}.

If $k$ is algebraically closed, 
 from de Jong's theorem \cite{MR1423020},
every $k$-variety $X$ admits a Galois alteration $Y \to X$ with $Y$ regular.
It uniquely factors as $Y \to \bar Y \to X$
such that $\bar Y$ is a normal variety, $Y \to \bar Y$
is finite and $\bar Y \to X$ is a modification.
Then the associated noncommutative blowup $\NCB(Y/X)=\NCB(Y/\bar Y)$
is Kunz regular. Thus every variety admits a noncommutative blowup which is Kunz regular (Corollary \ref{cor-main}).

Another interesting example of noncommutative blowups is the one associated
to an iterated Frobenius morphism $X_e \to X$ of a normal scheme $X$. 
In the affine case, this
corresponds to the above-mentioned ring $\End_R(R^{1/p^e})$. 
If $X$ has only F-pure and FFRT (finite F-representation type) singularities,
then for sufficiently large $e$, $X_e\to X$ is F-steady 
and the associated noncommutative blowup $\NCB(X_e/X)$ is Kunz regular (see Section \ref{sec-DB}). 
The FFRT singularity was introduced in \cite{MR1444312}
and proved to have $D$-module theoretic nice properties \cite{MR1444312,MR2407232}.  
Our result is yet another such property.

\subsection{Convention}

Throughout the paper, we work over a fixed base field $k$ unless otherwise noted.
We mean by a \emph{scheme} a separated Noetherian scheme over $k$.
In Sections \ref{sec-Frobenius}, \ref{sec-DB} and \ref{sec-compare},
we additionally assume that $k$ has characteristic $p>0$ and that
every scheme is F-finite, that is,  the Frobenius morphism is finite.
If $f : Y \to X$ is an affine morphism of schemes
and $\cM$ is a quasi-coherent sheaf on $Y$, 
then by abuse of notation, we denote the push-forward  $f_* \cM$ again by
$\cM$.

\subsection{Acknowledgment}

This work was supported by Grant-in-Aid for Young Scientists 
(20840036) from JSPS.

\section{Noncommutative schemes}\label{sec-nc-scheme}

\subsection{Pseudo-schemes}

Following  \cite[page 235]{MR1304753},
we first define a category.

\begin{defn}
A \emph{pseudo-scheme} is the pair $(\cA,M)$ 
of a $k$-linear abelian category $\cA$ and an object $M \in \cA$. 
 A \emph{morphism} $f: (\cA,M) \to (\cB ,N )$ of pseudo-schemes 
is the equivalence class of  pairs $(f^*, \theta)$ of a $k$-linear functor $f^*: \cB \to \cA$
which admits a right adjoint $f_* : \cA \to \cB$ and 
an isomorphism $\theta : f^*M \cong N$. Here two such pairs $(f^*, \theta)$ and  $((f^*)', \theta')$ are equivalent if there is an isomorphism $f^* \cong (f^*)'$ which is compatible
with $\theta$ and $\theta '$. The composition of morphisms is defined in the obvious way.
We denote the category of pseudo-schemes by $\PS$.
A morphism $f$ is said to be \emph{flat} if its pull-back functor $f^*$ is exact. 
\end{defn}

For a scheme $X$, we denote by $\Qcoh(X)$ the category of quasi-coherent sheaves on $X$.
We have a natural functor
\[
 (\text{scheme}) \to \PS, \  X \mapsto X^\ps:= (\Qcoh(X),\cO_X) .
\]
From a theorem of Gabriel \cite{MR0232821}, we can reconstruct $X$ from $X^\ps$
(which was generalized to the non-Noetherian case by Rosenberg \cite{MR1622759}):

\begin{thm}[Reconstruction of schemes]
If $X^\ps \cong Y^\ps$, then $X\cong Y$.
\end{thm}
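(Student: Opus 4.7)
The plan is to reduce this to the classical reconstruction theorem of Gabriel (and in the non-Noetherian case, Rosenberg) cited in the statement. An isomorphism $X^\ps \cong Y^\ps$ unpacks, by the definition of a pseudo-scheme, into an equivalence of $k$-linear abelian categories $\Phi : \Qcoh(X) \xrightarrow{\simeq} \Qcoh(Y)$ together with an isomorphism $\Phi(\cO_X) \cong \cO_Y$; the distinguished object is in fact not needed for the conclusion, so it suffices to show that the bare category $\Qcoh(X)$ determines the Noetherian scheme $X$ up to isomorphism.

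Gabriel's reconstruction proceeds in three intrinsic steps. First, recover the underlying set $|X|$ as the set of isomorphism classes of indecomposable injective objects of $\Qcoh(X)$; for Noetherian $X$ these are exactly the injective hulls $E(\cO_X/\fp)$ indexed by points $\fp \in X$. Second, recover the Zariski topology by matching specialization-closed subsets of $|X|$ with localizing Serre subcategories $\cT \subset \Qcoh(X)$, declaring the injective corresponding to $\fp$ to lie in the closed subset associated with $\cT$ precisely when it is an object of $\cT$. Third, recover the structure sheaf: for an open $U \subset X$ with associated torsion class $\cT_U$, the Serre quotient $\Qcoh(X)/\cT_U$ is equivalent to $\Qcoh(U)$, and one defines $\cO_X(U)$ to be the center of this category, i.e.\ the endomorphism ring of the identity functor, which for affine $U = \Spec R$ returns $R$.

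Since each of these constructions is manifestly invariant under $k$-linear equivalence of abelian categories, $\Phi$ induces a homeomorphism $|X| \cong |Y|$ and a compatible isomorphism of structure sheaves, giving $X \cong Y$. The main obstacle, and the substantive content of Gabriel's paper, is the verification of the third step: that the categorical center of $\Qcoh(U)$ is naturally isomorphic to $\Gamma(U,\cO_U)$ and that these rings glue to the structure sheaf. This rests on Matlis' classification of injectives over Noetherian rings together with the fact that $\Qcoh(U)$ is a Grothendieck category with a suitable generator; for non-Noetherian base schemes one defers to Rosenberg's refinement.
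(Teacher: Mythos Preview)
The paper does not actually prove this statement; it simply attributes it to Gabriel (with Rosenberg's extension mentioned parenthetically) and moves on. Your proposal is therefore not competing with a proof in the paper but rather unpacking the cited reference, and it does so correctly: the three steps you outline (indecomposable injectives as points, Serre localizing subcategories for the topology, centers of the Serre quotients for the structure sheaf) are indeed the spine of Gabriel's argument in the Noetherian case, and your observation that the distinguished object $\cO_X$ is not needed for this direction is accurate. In short, your approach and the paper's are the same---both appeal to Gabriel---only you have supplied the content of the citation.
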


We can also reconstruct morphisms:

\begin{prop}[Reconstruction of morphisms]
The functor $X \mapsto X^\ps$ is faithful.
\end{prop}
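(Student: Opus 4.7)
To show $X \mapsto X^\ps$ is faithful, I need to show that if two scheme morphisms $f, g: Y \to X$ induce the same morphism $Y^\ps \to X^\ps$ in $\PS$, then $f = g$. I would argue by reconstructing from a pseudo-scheme morphism $\varphi = [(f^*, \theta)]$ both the underlying continuous map $|f|$ and the structure sheaf map $f^\#$ in a way that depends only on the equivalence class of $\varphi$.

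For the continuous map: by (the proof of) the reconstruction theorem cited above, the topology of $X$---and in particular the support of any quasi-coherent sheaf---is intrinsic to $\Qcoh(X)$. Since $f^*$ is right exact (having a right adjoint) and satisfies stalkwise $(f^*\cF)_y = \cF_{f(y)} \otimes_{\cO_{X,f(y)}} \cO_{Y,y}$, any $\cF$ supported on a closed $Z \subset X$ has pullback supported on $f^{-1}(Z)$. Thus $f^*$ determines the closed set $f^{-1}(Z) \subset |Y|$ intrinsically for each closed $Z$; letting $Z$ range over closures of points of $X$ then recovers $|f|$.

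For the structure sheaf map: by adjunction, $\theta: f^*\cO_X \cong \cO_Y$ corresponds to a morphism $\cO_X \to f_* \cO_Y$ in $\Qcoh(X)$, which for a genuine scheme morphism is exactly $f^\#$. The right adjoint $f_*$ is determined by $f^*$ up to unique natural isomorphism, so this morphism is well defined on equivalence classes. For each open $U \subset X$ and $V = f^{-1}(U) \subset Y$ (intrinsically identifiable after the first step), one obtains by restriction a ring map $\Gamma(U, \cO_U) \to \Gamma(V, \cO_V)$ that must coincide with $(f|_V)^\#$ on global sections. Since morphisms with affine target are determined by their effect on global sections, varying $U$ over an affine open cover of $X$ recovers each $f|_V$, and hence $f$.

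The main technical obstacle is carrying out the Gabriel--Rosenberg reconstruction naturally, i.e., intrinsically identifying supports, opens, and rings of sections inside $\Qcoh(X)$ in a way compatible with morphisms of pseudo-schemes. This is essentially the content of the reconstruction theorem preceding the proposition; once that apparatus is granted, the above extraction of $|f|$ and $f^\#$ reduces to the functoriality of the reconstruction.
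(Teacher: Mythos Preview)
Your plan is correct in outline but takes a different route from the paper. The paper's argument is more elementary and avoids invoking the Gabriel--Rosenberg apparatus inside the proof. For affine $X=\Spec A$ it observes directly that any functor $f^*$ together with $\theta:f^*\cO_X\cong\cO_Y$ induces a $k$-algebra map
\[
A=\End_{\Qcoh(X)}(\cO_X)\longrightarrow \End_{\Qcoh(Y)}(\cO_Y)=\Gamma(Y,\cO_Y),
\]
and this map already determines the scheme morphism $f$. For general $X$, the paper first recovers the underlying set map by applying $f_*$ (not $f^*$) to the structure sheaves of integral closed subschemes of $Y$, and then reduces to the affine case by looking at $f^*(\iota_*\cM)$ for $\iota:U\hookrightarrow X$ an affine open.

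By contrast, you recover $|f|$ via supports of $f^*$ and then extract $f^\#$ from the adjunction unit $\cO_X\to f_*\cO_Y$. This is perfectly valid, but it leans on the reconstruction theorem to make ``support'' and ``restriction to an open'' intrinsic to $\Qcoh(X)$, which is exactly the technical obstacle you flag. The paper's trick of reading off the ring map from $\End(\cO_X)$ sidesteps this: it uses only that the distinguished object $\cO_X$ has the correct endomorphism ring and that a functor preserves composition, so the affine case needs no reconstruction at all. Your approach is more uniform and conceptually clean once the reconstruction is in hand; the paper's is shorter and more self-contained for the purpose at hand.

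One small point to tighten in your argument: from ``$f^*\cF$ is supported in $f^{-1}(Z)$'' you only get an upper bound; to pin down $f^{-1}(Z)$ exactly you should take the union of supports over all $\cF$ with $\mathrm{supp}\,\cF\subset Z$ (or single out $\cO_Z$, which the reconstruction lets you do).
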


\begin{proof}
Suppose that $f :Y \to X=  \Spec A$ be a morphism of schemes with $X$ affine. 
Then $f^\ps$ determines a $k$-algebra map
 $A = \End (A) \to  \Gamma(\cO_Y) = \End (\cO_Y)$ and so determines $f$.

Next suppose that $f:Y \to X$ be an arbitrary morphism of schemes. Then applying $f_*$ to 
the structure sheaves of integral closed subschemes of $Y$, we see that $f^\ps$ uniquely determines  $f$ as the map of sets. For each affine open subset
 $ \iota : U \hookrightarrow X$, applying $f^*$ to the sheaves
$\iota_* \cM$, $\cM \in \Qcoh(U)$, we see that $f^\ps$ uniquely determines
the scheme morphism $f|_{f^{-1}(U)} :f^{-1}(U) \to U$. As a consequence, $f^\ps$ uniquely determines $f$. Hence the functor  is faithful. 
\end{proof}

The above results allow us to identify a scheme $X$ (resp.\ a scheme morphism $f$)
with $X^\ps$ (resp.\ $f^\ps$).

\subsection{Noncommutative schemes}

\begin{defn}
Let $Z$ be a scheme.
A \emph{finite NC (noncommutative) scheme over $Z$} 
is the pair $  X=(\cA , \cM )$ of 
a coherent sheaf $\cA$ of $\cO_X$-algebras and a coherent sheaf  $\cM$ of left 
$\cA$-module.
We denote by $\Qcoh(X)=\Qcoh(\cA)$ the category of quasi-coherent left 
$\cA$-modules and set $X^\ps := (\Qcoh(X),\cM)$.  
Like a scheme,   we often identify $X$ and $X^\ps$.

A \emph{morphism} $X =( \cA , \cM) \to X' =(\cA ', \cM')$ of finite NC schemes over  $Z$ is
a morphism $X^\ps \to (X')^\ps$
defined by the functor 
\[
 \cN \otimes_{\cA'} - : \Qcoh(X') \to \Qcoh(X)
 \]
for some coherent sheaf  $\cN$ of $ (\cA , \cA') $-bimodules and an isomorphism $\cN \otimes_{\cA'} \cM' \cong  \cM$. Note that the functor has the right adjoint $\cHom _{\cA} ( \cN,-)$
and indeed defines a morphism in $\PS$.
\end{defn}

We do not construct the correct category of finite NC schemes over different schemes in this article.
Instead we will  work in 
the ambient category $\PS$.

\section{Alterations and noncommutative blowups}\label{sec-alteration}

\begin{defn} 
A morphism $Y \to X$ of integral schemes is called an \emph{alteration}
(resp.\ \emph{modification}) if it is generically finite, dominant and proper (resp.\ birational and proper). 
An alteration $Y \to X$ is said to be \emph{normal} (resp.\ \emph{regular}) 
if $Y$ is so. 
A \emph{finite-birational factorization} of a normal alteration $Y \to X$   is 
a factorization of $Y \to X$ into  a finite and dominant morphism $Y \to \bar Y$
and a modification $\bar Y \to X$ with $\bar Y$ normal.
(This is clearly unique up to isomorphism if exist.) 
A normal alteration is said to be \emph{factorizable} 
if it admits a finite-birational factorization.
An alteration $f : Y \to X$ is said to be \emph{Galois} if 
there exists a finite group $G$ of automorphisms of $Y$
such that if we give to $X$ the trivial $G$-action, then  $f$ is $G$-equivariant
and the field extension $K(Y)^G /K(X)$ is  purely inseparable. 
\end{defn}

\begin{lem}
Let $f:Y \to X$ be a normal Galois alteration. 
Suppose that the quotient algebraic space $Y/G$ is a scheme.
In the case where $k$ has positive characteristic, we suppose that $Y/G$ is F-finite.
Then $f$ is factorizable.
\end{lem}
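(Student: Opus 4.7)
The plan is to take $\bar Y := Y/G$ as the candidate middle scheme of a finite-birational factorization $Y \to \bar Y \to X$. By hypothesis $\bar Y$ is a scheme (and F-finite in characteristic $p > 0$), and since $f$ is $G$-equivariant with trivial $G$-action on $X$, it factors through $\bar Y$. The remaining tasks are to verify that $Y \to \bar Y$ is finite and dominant, that $\bar Y$ is normal, and that $\bar Y \to X$ is a modification, i.e., proper and birational.

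The finiteness, normality, and properness are the routine structural checks. Working on an affine open $\Spec A \subseteq \bar Y$ with preimage $\Spec B \subseteq Y$, one has $A = B^G$, and classical invariant theory applied to the finite group $G$ acting on the Noetherian ring $B$ gives that $B$ is module-finite over $B^G$; surjectivity of $Y \to \bar Y$ is built into the quotient construction. For normality of $\bar Y$, one checks that $B^G$ is integrally closed in $\mathrm{Frac}(B^G) = \mathrm{Frac}(B)^G$: any $x \in \mathrm{Frac}(B)^G$ integral over $B^G$ is integral over the normal domain $B$ and is $G$-invariant, hence lies in $B^G$. Properness of $\bar Y \to X$ then follows by the standard descent of properness along the surjection $Y \to \bar Y$.

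The main obstacle is the birationality of $\bar Y \to X$. The standard identity $K(\bar Y) = K(Y)^G$ combined with the Galois hypothesis reduces the problem to showing that a purely inseparable extension $K(Y)^G/K(X)$ yields a birational morphism $\bar Y \to X$. In the classical separable Galois case one has $K(Y)^G = K(X)$ outright and birationality is immediate. In the genuinely purely inseparable regime in characteristic $p > 0$, this is where the F-finiteness hypothesis on $\bar Y$ must enter: the plan is to exploit the F-finite structure to compare $\bar Y$ and $X$ via a sufficiently high Frobenius iterate, so that the purely inseparable component of $K(Y)^G/K(X)$ is absorbed into a power of Frobenius on $X$ and $\bar Y \to X$ is realized as a modification in the sense required. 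This last step is the delicate point of the proof, since in the strict function-field reading $K(Y)^G = K(X)$ may fail and one must invoke the Frobenius-compatible structure encoded by F-finiteness to rescue birationality.
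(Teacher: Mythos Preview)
Your characteristic $0$ case is fine and agrees with the paper. The gap is in positive characteristic: having fixed $\bar Y := Y/G$, you cannot make $\bar Y \to X$ a modification when $K(Y)^G \ne K(X)$. Birationality means exactly that the function fields coincide, and no ``absorption into a power of Frobenius on $X$'' changes the function field of $\bar Y$. So the sentence ``$\bar Y \to X$ is realized as a modification'' is simply false with your choice of $\bar Y$, and the proof stops there. You correctly located the obstruction but did not remove it.

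The paper's fix is to change $\bar Y$ rather than $X$. Choose $e$ with $(K(Y)^G)^{p^e} \subset K(X)$, which is possible because $K(Y)^G/K(X)$ is purely inseparable and finite. The $e$-th power map gives a morphism $Y/G \to (Y/G)^e$, finite by the F-finiteness hypothesis. Now let $\bar Y$ be the normalization of $(Y/G)^e$ in $K(X)$. By construction $K(\bar Y)=K(X)$, so $\bar Y \to X$ is birational; properness and normality are clear. To see that $Y \to X$ factors through $\bar Y$, take compatible $G$-stable affine covers $Y=\bigcup \Spec S_i$ and $X=\bigcup \Spec R_i$; then $\bar Y = \bigcup \Spec(S_i \cap K(X))$, and the inclusions $R_i \subset S_i \cap K(X) \subset S_i$ give the factorization with $Y \to \bar Y$ finite. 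Your structural checks (finiteness of $Y \to Y/G$, normality of invariants, descent of properness) are all relevant and feed into this, but the key missing step is replacing $Y/G$ by this normalized Frobenius twist so that the intermediate scheme actually has function field $K(X)$.
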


\begin{proof}
In characteristic $0$, the natural morphism $Y/G \to X$ is birational,
hence $f$ is factorizable. Let us suppose that $k$ has characteristic $p>0$.
We take $e \in \Znn$ such that  $(K(Y)^G)^{p^e} \subset K(X)$.
Let $Y/G$ be the quotient variety and $Y/G \to (Y/G)^{e}$ the morphism
corresponding to the inclusion $\cO_{Y/G}^{p^e} \hookrightarrow \cO_{Y/G}$ of sheaves,
though this is not a morphism of $k$-schemes unless $k$ is perfect. 
Let $\bar Y$ be the normalization of $(Y/G)^e$ in $K(X)$.
Then we claim that $f$ factorizes through $\bar Y$.
To see this, we 
take affine open coverings $Y = \bigcup \Spec S_i$ and $X = \bigcup \Spec R_i$
such that for each $i$, $f(\Spec S_i) \subset \Spec R_i$
and $\Spec S_i$ is stable under the $G$-action. 
Set $\bar S_i := S_i \cap K(X)$. Then $\bar Y = \bigcup \Spec \bar S_i$.
Since $S_i \supset \bar S_i \supset R_i$,  the claim holds. 
\end{proof}

\begin{defn}\label{def-NC-blowup}
For a torsion-free coherent sheaf $\cM$ on an integral scheme $X$,
we write $\cE_{\cM/X} := \cEnd_{\cO_X}(\cM)$. 
We define the \emph{NC blowup of $X$ associated to $\cM$}, $\NCB(\cM/X)$,
to be the finite NC scheme $(\cE_{\cM/X},\cM )$ over $X$.

For a dominant finite morphism $f:Y \to X$ of integral schemes,
we put $\cE_{Y/X} := \cE_{\cO_{Y} /X}$ and 
$\NCB(Y/X):= \NCB(\cO_{Y}/X)$. 
Since $\cO_Y$ is a subring of $\cE_{Y/X}$,
we have the induced functor 
$\Qcoh(\cE_{Y/X}) \to  \Qcoh(\cO_Y)$,
which is identical to $_{\cO_Y} \cE_{Y/X} \otimes_{\cE_{Y/X}} -$. 
We call the corresponding morphism $Y \to \NCB(Y/X)$ the \emph{coforgetful morphism}.
This is obviously flat.
We define the \emph{projection} $\NCB(Y/X) \to X$ 
by $_{\cE_{Y/X}} \cO_Y \otimes_{\cO_X} -$.
The composite morphism
\begin{equation*}\label{factorization-noncommutative}
Y \to \NCB (Y/X)\to X.
\end{equation*}
is exactly the original morphism $Y \to X$. 
\end{defn}

\begin{defn}
For a factorizable normal alteration $Y \to X$, 
if  $Y \to \bar Y \to X$ is the finite-birational factorization, then 
we define the \emph{associated NC blowup}, $\NCB(Y/X)$, to be $\NCB(Y/\bar Y)$.
\end{defn}

\begin{rem}
The normality assumption in the above definition is not really necessary, but just for simplicity. 
\end{rem}

Every factorizable normal alteration $Y \to X$
factors also as $Y \to \NCB(Y/X) \to X$.

\section{Frobenius morphisms}\label{sec-Frobenius}

In this section, we shall define the Frobenius morphism for some class of
noncommutative blowups.

From now on, we suppose that the base field $k$ has characteristic $p>0$.
We also suppose that every scheme has the \emph{finite} Frobenius morphism.

\subsection{Equivalent modules}

\begin{defn}
Let $R$ be a commutative complete local Noetherian ring.
Then every finitely generated $R$-module is uniquely the direct sum of finitely many
indecomposable $R$-modules. 
We say that 
$R$-modules $M$ and $N$ are \emph{equivalent}
if $M\cong \bigoplus_{i} L_{i}^{\oplus a_{i}}$ 
and $N \cong \bigoplus_{i} L_{i}^{\oplus b_{i}}$
for some indecomposable $R$-modules $L_{i}$ and positive integers $a_{i}$ and $b_{i}$.

We say that  coherent sheaves $\cM$ and $\cN$ on a scheme $X$ are \emph{equivalent}
if for every point $x \in X$, the complete stalks $\hat \cM_{x}$
and $\hat \cN_{x}$ are equivalent $\hat \cO_{X,x}$-modules.
\end{defn}

Given  coherent sheaves $\cM$ and $\cN$ on a scheme $X$.
We think of $\cM$ as an $(\cEnd(\cM),\cO_X)$-bimodule
and similarly for $\cN$. Then 
$\cHom(\cM,\cN) = \cHom_{\cO_X} (\cM,\cN)$ is an $(\cEnd (\cN),\cEnd (\cM) )$-bimodule.

\begin{lem}\label{lem-isom-hom-tensor}
Let $\cL$, $\cM$ and $\cN$ be coherent sheaves on
$X$ which are mutually  equivalent.
\begin{enumerate}
\item We have a natural isomorphism of $(\cEnd(\cN),\cEnd(\cL))$-bimodules
\[
\cHom (\cM,\cN) \otimes_{\cEnd (\cM)} \cHom (\cL,\cM) \cong \cHom (\cL,\cN) .
\]
In particular
\[
\cHom (\cM,\cN) \otimes_{\cEnd (\cM)} \cHom (\cN,\cM) \cong \cEnd (\cN) .
\]
Hence the functors 
\begin{align*}
\cHom (\cM,\cN) \otimes_{\cEnd (\cM)} - : \Qcoh( \cEnd (\cM)) \to \Qcoh( \cEnd (\cN)) \\
\cHom (\cN,\cM) \otimes_{\cEnd (\cN)} - : \Qcoh( \cEnd (\cN)) \to \Qcoh( \cEnd (\cM))
\end{align*}
are equivalences which are inverses to each other.
\item
We have a natural isomorphism of $(\cEnd(\cN),\cO_X)$-bimodules
\[
\cHom (\cM,\cN) \otimes_{\cEnd (\cM)} \cM \cong \cN .
\]
\end{enumerate}
\end{lem}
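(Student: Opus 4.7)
My plan is to construct the natural candidate maps, reduce the question to an algebraic statement over a complete local Noetherian ring, and finish by an additivity argument. The natural candidates are composition $f \otimes g \mapsto f \circ g$ for (1) and evaluation $f \otimes m \mapsto f(m)$ for (2). A routine check shows both are well-defined on the tensor products and respect the claimed bimodule structures, with no use of the equivalence hypothesis; what remains is to show they are isomorphisms.

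Being an isomorphism of coherent sheaves can be checked locally, and both $\cHom$ and $\otimes$ commute with passage from a stalk to its completion (standard for finitely presented modules over a Noetherian local ring); by faithful flatness of the completion map, one may test an isomorphism after completion. So I would reduce to the following: for a complete local Noetherian ring $R$ and equivalent finitely generated $R$-modules $L$, $M$, $N$, the composition map $\Hom(M, N) \otimes_{\End(M)} \Hom(L, M) \to \Hom(L, N)$ and the evaluation map $\Hom(M, N) \otimes_{\End(M)} M \to N$ are isomorphisms.

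Here the Krull--Schmidt theorem and the equivalence hypothesis together imply that every indecomposable summand of $L$ or of $N$ also occurs in $M$, so $L$ and $N$ are each direct summands of $M^{\oplus s}$ for sufficiently large $s$. Both sides of the two maps are additive in $L$ and in $N$ and respect direct sum decompositions, so it is enough to verify the isomorphisms in the special case $L = M^{\oplus s}$ and $N = M^{\oplus t}$. There everything collapses to tautology: $\Hom(M^{\oplus s}, M) \cong \End(M)^{\oplus s}$ as a left $\End(M)$-module, hence
\[
\Hom(M, N) \otimes_{\End(M)} \Hom(M^{\oplus s}, M) \cong \Hom(M, N)^{\oplus s} \cong \Hom(M^{\oplus s}, N),
\]
and similarly $\End(M) \otimes_{\End(M)} M \cong M$ for (2). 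The specialization $\cL = \cN$ in (1) is immediate from the general iso, and the equivalence of the two displayed functors follows formally from it together with its $\cM \leftrightarrow \cN$ twin. The only step demanding any real care is the reduction to completed stalks, which is forced by the stalkwise-completion definition of ``equivalent''; after that it is pure Morita theory inside $\mathrm{add}(M)$, and I foresee no substantive obstacle.
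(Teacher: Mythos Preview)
Your proposal is correct and follows essentially the same approach as the paper: construct the natural composition/evaluation maps, reduce to the completed stalks (as forced by the definition of ``equivalent''), and verify the isomorphism there. The paper's proof is extremely terse, simply asserting that the natural map ``is an isomorphism after the completion at each point of $X$''; your Krull--Schmidt/additivity argument inside $\mathrm{add}(M)$ is exactly the standard way to unpack that assertion.
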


\begin{proof}
These are well-known to the specialists.
\begin{enumerate}
\item There exists a natural morphism 
\[
\cHom (\cM,\cN) \otimes_{\cEnd (\cM)} \cHom (\cL,\cM) \to \cHom (\cL,\cN) .
\]
It is easy to see that the morphism is an isomorphism after the completion
at each point of $X$. Hence the morphism is an isomorphism. 
\item
The proof is similar to the above.
\end{enumerate}
\end{proof}

\subsection{F-steady modules and Frobenius morphisms}

For a scheme $X$, we write the $e$-iterated $k$-linear Frobenius
as
\[
F^e=F^e_X :X_e  \to X .
\]
Sometimes we simply call this the \emph{$e$-th Frobenius of $X$}.
A key observation is that the morphism $F^e$ factors as 
$X_e \to \NCB(X_e/X) \to X$ (see Definition \ref{def-NC-blowup}).

\begin{defn}
Let $X$ be an integral normal scheme and $\cM$ a reflexive coherent sheaf
(that is, $\cM^{\vee \vee} \cong \cM$). 
Denote by $\cM_{e}$ the sheaf on $X_{e}$ corresponding to $\cM$
via the obvious identification $X_{e} =X$. Then $ \cM_{e}$
is identical as an $\cO_{X}$-module to the push-forward of $\cM$ by the $e$-iterated \emph{absolute}
Frobenius.
We say that $\cM$ is  \emph{F-steady}
if for every $e$, $\cM$ and $\cM_e$ are equivalent $\cO_X$-modules.

For   a finite dominant morphism $f: Y \to X$ of integral normal schemes,
we say that $f$ is \emph{F-steady} if $\cO_{Y}$ is an F-steady $\cO_{X}$-module. 
\end{defn}

\begin{ex}
 If $Y$ is regular, then $f$ is F-steady. Indeed being flat over $\cO_Y$, 
 $\cO_{Y_e}$ is locally isomorphic to $\cO_Y^{\oplus r}$, $r>0$,
 as an $\cO_Y$-module and hence also as an $\cO_X$-module. 
\end{ex}

From Lemma \ref{lem-isom-hom-tensor},
for an $F$-steady sheaf $\cM$,
we have an isomorphism 
\[
\NCB(\cM_e/X) \cong \NCB(\cM_{e'}/X) , \  e,e' \ge 0. 
\]
We also define a morphism 
\[
 \NCB(\cM_e/X_e) \to \NCB(\cM_e/X)  ,
 \]
 which we call the \emph{coforgetful morphism},
  as follows.
We think of $\cO_X$ as a subring of $\cO_{X_e} = \cO_{X}^{1/p^{e}}$
in the obvious way. Then  $\cE_{\cM_e/X_e}$ is a subring of $ \cE_{\cM_e/X}$.
Hence 
we have a natural morphism $\NCB(\cM_e/X_e) \to \NCB(\cM_e/X)$
defined by $_{\cE_{\cM_e/X_e}}\cE_{\cM_e/X} \otimes_{  \cE_{\cM_e/X}}-$.

\begin{defn}
Let $\cM$ be  an F-steady sheaf on $X$.
We define the \emph{$e$-th Frobenius} of $\NCB(\cM/X)$
to be the composite 
\[
F^e=F^e_{\NCB(\cM/X)} : \NCB(\cM_e/X_e) \xrightarrow{\textnormal{cofor.}} \NCB(\cM_e/X) \xrightarrow{\sim} \NCB(\cM/X) .
\]
By construction, this is flat, which we call the \emph{Kunz regularity} of $ \NCB(\cM/X)$.
(Recall that from Kunz \cite{MR0252389}, a scheme is regular if and only if its Frobenius morphisms are flat.)
 The morphism  is also directly defined by the functor 
 \[
 _{\cE_{\cM_e/X_e}} \cHom_{\cO_X}(\cM , \cM_{e}) \otimes_{\cE_{\cM/X}} - .
 \] 
\end{defn}

\begin{cor}\label{cor-main}
Suppose that $k$ is algebraically closed
and that $X$ is an arbitrary $k$-variety. Take a regular Galois alteration $Y\to X$ with $Y$ quasi-projective.
(Such an alteration exists from de Jong's theorem \cite[7.3]{MR1423020}).
Then the associated NC blowup $\NCB(Y/X)$  is Kunz regular.
\end{cor}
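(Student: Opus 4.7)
The plan is to chain together three ingredients already developed in the paper: the factorizability lemma of Section~\ref{sec-alteration}, the F-steadiness of any finite dominant morphism from a regular scheme, and the by-construction flatness of the Frobenius morphism of an F-steady noncommutative blowup.

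First I would produce a finite-birational factorization $Y \to \bar Y \to X$ of the given regular Galois alteration $f$. By the lemma in Section~\ref{sec-alteration}, this reduces to checking that the quotient algebraic space $Y/G$ is a scheme and, since $k$ has positive characteristic, that it is F-finite. Quasi-projectivity of $Y$ secures the first point: a finite group action on a quasi-projective scheme admits a geometric quotient in the category of schemes, since every finite subset of a quasi-projective scheme is contained in an affine open and one glues the resulting affine quotients. F-finiteness of $Y/G$ is automatic, since it is of finite type over the algebraically closed, hence perfect, hence F-finite field $k$. The lemma then produces the desired factorization, and by definition $\NCB(Y/X) = \NCB(Y/\bar Y)$.

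Next I would verify F-steadiness of $Y \to \bar Y$ and invoke the construction of the Frobenius. Here the regularity of $Y$ enters: the Example in Section~\ref{sec-Frobenius} states precisely that any finite dominant morphism with regular source is F-steady, via the local freeness of $\cO_{Y_e}$ over $\cO_Y$. With F-steadiness in hand, the $e$-th Frobenius of $\NCB(Y/\bar Y)$ is defined as in Section~\ref{sec-Frobenius} and is flat by construction; this is exactly the Kunz regularity of $\NCB(Y/X)$.

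The statement is essentially a bookkeeping corollary of the machinery built up in the previous two sections, so no deep obstacle is to be expected. The only point deserving explicit attention is the passage from the algebraic-space quotient $Y/G$ to a genuine scheme quotient — precisely the place where quasi-projectivity of $Y$, the extra input provided by de Jong's theorem, is essential.
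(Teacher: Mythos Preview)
Your proposal is correct and follows exactly the paper's approach: factorize via the lemma of Section~\ref{sec-alteration}, then use regularity of $Y$ to get F-steadiness of $Y \to \bar Y$, and conclude Kunz regularity from the construction in Section~\ref{sec-Frobenius}. The only difference is that you spell out why the hypotheses of the factorizability lemma hold (schemehood of $Y/G$ from quasi-projectivity, F-finiteness from finite type over a perfect field), whereas the paper simply asserts factorizability in one line.
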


\begin{proof} We first note that $ Y \to X$ is factorizable.  
Let $Y \to \bar Y \to X$ be the finite-birational factorization.
Since $Y$ is regular, the morphism $Y \to \bar Y$ is F-steady. Hence  $ \NCB(Y/\bar Y) = \NCB(Y/X)$
 is Kunz regular.
\end{proof}

\begin{rem}
Bondal and Orlov \cite{MR1957019} conjectured the following: 
Let $Y \to X$ be a finite morphism of varieties such that $X$ has canonical singularities
and $Y$ is regular. Then the derived category of $\cE_{Y/X}$-modules is a minimal
categorical desingularization. 
Their conjecture and the above corollary seem somehow related.   
\end{rem}

\subsection{Compatibilities of Frobenius morphisms}

In this subsection,  to justify our definition of the Frobenius morphism,
we show some compatibilities of it (see also Section \ref{sec-compare}). 
We suppose that $\cM$ is an F-steady reflexive
 coherent sheaf on an integral normal scheme $X$.

\begin{prop}
The diagram 
\[
\xymatrix{
\NCB(\cM_e/X_e) \ar[r]^{F^e} \ar[d]_{\textnormal{proj.}} & \NCB(\cM/X) \ar[d]^{\textnormal{proj.}} \\
X_{e} \ar[r]_{F^e} & X
}
\]
is commutative. 
\end{prop}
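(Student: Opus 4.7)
The plan is to verify commutativity by comparing the two composite pull-back functors $\Qcoh(X) \to \Qcoh(\cE_{\cM_e/X_e})$ arising from traversing the square. Since morphisms in $\PS$ are encoded by pull-back functors up to natural isomorphism, exhibiting such an isomorphism suffices.

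First I would read off the four pull-back functors. By Definition \ref{def-NC-blowup} the two projections are
\[
\cM \otimes_{\cO_X} - : \Qcoh(X) \to \Qcoh(\cE_{\cM/X})
\quad\text{and}\quad
\cM_e \otimes_{\cO_{X_e}} - : \Qcoh(X_e) \to \Qcoh(\cE_{\cM_e/X_e}).
\]
The bottom arrow $F^e : X_e \to X$ is the honest Frobenius, whose pull-back is $\cO_{X_e} \otimes_{\cO_X} -$, and the top arrow $F^e_{\NCB(\cM/X)}$ is, by the definition immediately preceding the proposition, given by $\cHom_{\cO_X}(\cM, \cM_e) \otimes_{\cE_{\cM/X}} -$.

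Composing along the lower-left route sends $\cN$ to $\cM_e \otimes_{\cO_{X_e}} (\cO_{X_e} \otimes_{\cO_X} \cN) \cong \cM_e \otimes_{\cO_X} \cN$, while the upper-right route yields $\cHom_{\cO_X}(\cM, \cM_e) \otimes_{\cE_{\cM/X}} \cM \otimes_{\cO_X} \cN$. The two are naturally isomorphic by Lemma \ref{lem-isom-hom-tensor}(2) applied to the equivalent sheaves $\cM$ and $\cM_e$, which supplies an isomorphism $\cHom_{\cO_X}(\cM, \cM_e) \otimes_{\cE_{\cM/X}} \cM \cong \cM_e$ of $(\cEnd_{\cO_X}(\cM_e), \cO_X)$-bimodules; tensoring this on the right with $\cN$ over $\cO_X$ then produces the required natural isomorphism of functors.

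The only point requiring care is the bimodule bookkeeping, and it is where I expect the one small obstacle to sit. The $(\cEnd_{\cO_X}(\cM_e), \cO_X)$-bimodule structure coming out of Lemma \ref{lem-isom-hom-tensor}(2) must be restricted to an $(\cE_{\cM_e/X_e}, \cO_X)$-bimodule structure; this is automatic because $\cE_{\cM_e/X_e} = \cEnd_{\cO_{X_e}}(\cM_e)$ is a subring of $\cEnd_{\cO_X}(\cM_e)$. Similarly, the right $\cO_{X_e}$-structure on $\cM_e$ extends its right $\cO_X$-structure through the Frobenius inclusion $\cO_X \hookrightarrow \cO_{X_e}$, which justifies the identification $\cM_e \otimes_{\cO_{X_e}} (\cO_{X_e} \otimes_{\cO_X} \cN) \cong \cM_e \otimes_{\cO_X} \cN$ used on the lower route. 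Once these compatibilities are noted, the commutativity is just a formal matching of functors.
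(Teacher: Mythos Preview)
Your argument is correct and is precisely the paper's own proof, spelled out in more detail: the paper simply records the chain of $(\cE_{\cM_e/X_e},\cO_X)$-bimodule isomorphisms $\cM_{e}\otimes_{\cO_{X_e}}\cO_{X_e}\cong\cM_{e}\cong\cHom_{\cO_X}(\cM,\cM_{e})\otimes_{\cE_{\cM/X}}\cM$ by Lemma~\ref{lem-isom-hom-tensor}, which is exactly the isomorphism of the two composite pull-back bimodules that you computed. Your remark that the $(\cEnd_{\cO_X}(\cM_e),\cO_X)$-bimodule structure from the lemma restricts to the required $(\cE_{\cM_e/X_e},\cO_X)$-structure via the inclusion $\cE_{\cM_e/X_e}\subset\cEnd_{\cO_X}(\cM_e)$ is the one point the paper leaves implicit.
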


\begin{proof}
From Lemma \ref{lem-isom-hom-tensor}, we have isomorphisms
of $(\cE_{\cM_e/X_e},\cO_X)$-bimodules
\[
     \cM_{e}    \otimes_{\cO_{X_e}}  \cO_{X_e} \cong  \cM_{e}  \cong      
       \cHom_{\cO_X}( \cM,\cM_{e})  
     \otimes_{\cE_{\cM/X}} \cM ,
\]
which proves the proposition.
\end{proof}

\begin{lem}\label{lem-commutative}
For $e'\ge e \ge 0$,  the diagram
\[
\xymatrix{
\NCB(\cM_{e'}/X_e)  \ar[r]^{\textnormal{cofor.}} \ar[d]_\wr & \NCB(\cM_{e'}/X) \ar[d]^\wr \\
\NCB(\cM_e/X_e)  \ar[r]_{\textnormal{cofor.}}  & \NCB(\cM_e/X)
}
\]
 is commutative.
\end{lem}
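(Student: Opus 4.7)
The plan is to check commutativity at the level of pull-back functors, since a morphism in $\PS$ is determined by its pull-back together with the isomorphism of distinguished objects. Each of the four arrows in the square unpacks explicitly: by definition, the two coforgetful morphisms pull back to restriction of scalars along $\cE_{\cM_{e'}/X_e}\subset \cE_{\cM_{e'}/X}$ (top) and $\cE_{\cM_e/X_e}\subset \cE_{\cM_e/X}$ (bottom); by Lemma \ref{lem-isom-hom-tensor}(1), the two vertical isomorphisms pull back to tensoring with the bimodules $\cHom_{\cO_X}(\cM_e,\cM_{e'})$ over $\cE_{\cM_e/X}$ on the right and $\cHom_{\cO_{X_e}}(\cM_e,\cM_{e'})$ over $\cE_{\cM_e/X_e}$ on the left. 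F-steadiness of $\cM$ on $X$ supplies the equivalence of $\cM_e$ and $\cM_{e'}$ both as $\cO_X$-modules and, after using the identification $X_e\cong X$ of underlying schemes, as $\cO_{X_e}$-modules, so both applications of Lemma \ref{lem-isom-hom-tensor} are legitimate.

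For $N \in \Qcoh(\cE_{\cM_e/X})$ the two composite pull-backs are
\[
\mathrm{restr}\bigl(\cHom_{\cO_X}(\cM_e,\cM_{e'}) \otimes_{\cE_{\cM_e/X}} N\bigr)
\quad\text{and}\quad
\cHom_{\cO_{X_e}}(\cM_e,\cM_{e'}) \otimes_{\cE_{\cM_e/X_e}} \mathrm{restr}(N),
\]
and the inclusion $\cHom_{\cO_{X_e}}(\cM_e,\cM_{e'})\subset \cHom_{\cO_X}(\cM_e,\cM_{e'})$, compatible with $\cE_{\cM_e/X_e}\subset \cE_{\cM_e/X}$, gives a natural transformation from the second to the first sending $\psi\otimes n$ to $\psi\otimes n$. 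Since both functors are right-exact and the second composite equals $\bigl(\cHom_{\cO_{X_e}}(\cM_e,\cM_{e'})\otimes_{\cE_{\cM_e/X_e}} \cE_{\cM_e/X}\bigr)\otimes_{\cE_{\cM_e/X}} N$ by associativity of tensor product, checking that the natural transformation is an isomorphism reduces to the single bimodule identity
\[
\cHom_{\cO_{X_e}}(\cM_e,\cM_{e'}) \otimes_{\cE_{\cM_e/X_e}} \cE_{\cM_e/X} \cong \cHom_{\cO_X}(\cM_e,\cM_{e'}),
\]
with the iso given by the composition pairing $\psi\otimes g\mapsto \psi\circ g$.

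This last identity is local, so I reduce to the complete local setting at a point $x\in X$: writing $R=\hat\cO_{X,x}$, $S=\hat\cO_{X_e,x}$, $M=\hat\cM_{e,x}$, and $M'=\hat\cM_{e',x}$, F-steadiness ensures that $M$ and $M'$ share the same indecomposable summands both over $R$ and over $S$, whence the identity $\Hom_S(M,M')\otimes_{\End_S(M)}\End_R(M)\cong \Hom_R(M,M')$ follows from a direct matrix-level calculation over these common summands. Finally, Lemma \ref{lem-isom-hom-tensor}(2), applied over both $X$ and $X_e$, sends the distinguished object $\cM_e$ to $\cM_{e'}$ via canonical isomorphisms that agree under the bimodule identification above, so the diagram commutes in $\PS$.

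The main obstacle I expect is the local Morita identity: without F-steadiness, the $\cO_{X_e}$-linear Hom-sheaf would not in general suffice to generate the $\cO_X$-linear Hom-sheaf after extension of scalars along $\cE_{\cM_e/X_e}\hookrightarrow \cE_{\cM_e/X}$. It is essential that the indecomposable summand types of $\cM_e$ and $\cM_{e'}$ match across both ground rings, and this is exactly what F-steadiness is designed to provide.
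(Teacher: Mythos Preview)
Your reduction is exactly the paper's: both routes boil the commutativity down to the single bimodule identity
\[
\alpha:\ \cHom_{\cO_{X_e}}(\cM_e,\cM_{e'}) \otimes_{\cE_{\cM_e/X_e}} \cE_{\cM_e/X}\ \longrightarrow\ \cHom_{\cO_X}(\cM_e,\cM_{e'}),\qquad \phi\otimes\psi\mapsto \phi\circ\psi,
\]
and then check that $\alpha$ is an isomorphism. Where you diverge from the paper is in that check. You pass to the completion at a point and invoke F-steadiness over both $R=\hat\cO_{X,x}$ and $S=\hat\cO_{X_e,x}$ to run a Morita-style matrix computation; indeed, once $M'$ is a summand of $M^r$ over $S$ via an idempotent $p\in\End_S(M^r)$, both sides identify with $p\cdot\End_R(M)^r$ and $\alpha$ becomes the identity (so only the $S$-equivalence is actually needed here). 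The paper instead argues globally: it checks $\alpha$ is an isomorphism over the locally free locus $U$ (where $\cM_{e'}\cong\cM_e^{\oplus r}$ locally as $\cO_{X_e}$-modules), then observes that both sides are flat right $\cE_{\cM_e/X}$-modules, hence locally summands of $\cE_{\cM_e/X}^{\oplus l}$, hence reflexive $\cO_X$-modules by normality of $X$; reflexivity plus agreement on the codimension~$\ge 2$ complement forces $\alpha$ to be an isomorphism everywhere. Your local approach is more direct and does not invoke the normality/reflexivity machinery; the paper's approach makes the role of the normality hypothesis on $X$ explicit and avoids completing.
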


\begin{proof}
There exists a natural morphism 
\begin{align*}
\alpha : \cHom_{\cO_{X_e}}(\cM_e,\cM_{e'}) \otimes _{\cE_{\cM_e/X_e} } \cE_{\cM_e/X} & \to \cHom_{\cO_{X}}(\cM_{e},\cM_{e'})  \\ 
\phi \otimes \psi & \mapsto \phi \circ \psi . 
\end{align*}
We claim that this is an isomorphism, which proves the lemma.

Let $U \subset X$ be an open subset such that
$X \setminus U$ has codimension $\ge 2$
and $\cM|_{U}$ is locally free.
Then locally on $U$,   
we have an isomorphism of $\cO_{X_e}$-modules 
$\cM_{e'} \cong \cM_{e} ^{\oplus r}$ for some $r$.
Hence locally on $U$, the source and target of $\alpha$ are both isomorphic to $\cE_{\cM_e/X}^{\oplus r}$. 
It is now easy to see that $\alpha$ is an isomorphism over $U$.

Moreover both hand sides are flat right $\cE_{\cM_e/X}$-modules
and  hence locally isomorphic to direct summands of 
$ \cE_{\cM_e/X}^{\oplus l} $ for some $l$.
From the normality assumption, $ \cE_{\cM_e/X}^{\oplus l} $ is a reflexive $\cO_X$-module 
(see \cite[Prop.\ 1.6]{MR597077})
and so are its direct summands. So $\alpha$ is an isomorphism all over $X$. 
We have proved the claim and the lemma. 
\end{proof}

\begin{cor}\label{cor-commutative-frob}
For $e,e' \ge 0 $,  the diagram
\[
\xymatrix{
\NCB(\cM_{e+e'}/X_e)  \ar[r]^{F^e} \ar[d]_\wr &  \NCB(\cM_{e'}/X) \ar[d]^\wr \\
\NCB(\cM_e/X_e)  \ar[r]_{F^e}  & \NCB(\cM/X) 
}
\]
is commutative. 
\end{cor}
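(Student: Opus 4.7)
My plan is to unwrap the definition of each horizontal Frobenius in the square and reduce the claim directly to Lemma \ref{lem-commutative}. Recall that for any reflexive F-steady sheaf $\cN$ on $X$, the morphism $F^e$ of $\NCB(\cN/X)$ is by definition the composite of a coforgetful morphism with the canonical isomorphism $\NCB(\cN_e/X)\xrightarrow{\sim}\NCB(\cN/X)$ from Lemma \ref{lem-isom-hom-tensor}(1). Applying this with $\cN=\cM_{e'}$ on the top row (using $(\cM_{e'})_e=\cM_{e+e'}$, plus the fact that F-steadiness of $\cM$ forces F-steadiness of $\cM_{e'}$), and with $\cN=\cM$ on the bottom row, both horizontal arrows acquire the form (coforgetful)$\circ$(canonical iso).

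I then split the square into two squares by inserting the intermediate column given by the canonical isomorphism $\NCB(\cM_{e+e'}/X)\xrightarrow{\sim}\NCB(\cM_e/X)$, obtaining the rectangle
\[
\xymatrix{
\NCB(\cM_{e+e'}/X_e) \ar[r]^{\textnormal{cofor.}} \ar[d]_\wr & \NCB(\cM_{e+e'}/X) \ar[d]^\wr \ar[r]^\sim & \NCB(\cM_{e'}/X) \ar[d]^\wr \\
\NCB(\cM_e/X_e) \ar[r]_{\textnormal{cofor.}} & \NCB(\cM_e/X) \ar[r]_\sim & \NCB(\cM/X).
}
\]
The left square is literally Lemma \ref{lem-commutative} with $e+e'$ in place of the ``$e'$'' there, so it commutes.

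The right-hand square consists entirely of canonical isomorphisms from Lemma \ref{lem-isom-hom-tensor}(1), and it commutes by the transitivity built into that lemma,
\[
\cHom(\cM_b,\cM_c)\otimes_{\cE_{\cM_b/X}}\cHom(\cM_a,\cM_b) \cong \cHom(\cM_a,\cM_c),
\]
applied with $(a,b,c)=(0,e,e+e')$ along one route and $(0,e',e+e')$ along the other; both composites then reduce to the single functor $\cHom(\cM,\cM_{e+e'})\otimes_{\cE_{\cM/X}}-$. I do not foresee a serious obstacle: the substantive geometric content lives in Lemma \ref{lem-commutative}, and the right square is purely formal. The only mild care needed is in tracking the identification $(\cM_{e'})_e=\cM_{e+e'}$ and in verifying that the iso supplied by Lemma \ref{lem-isom-hom-tensor}(1) is indeed the one appearing in the definition of $F^e$.
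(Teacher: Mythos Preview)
Your argument is correct. Both your proof and the paper's rely on Lemma~\ref{lem-commutative} together with the transitivity of the canonical isomorphisms from Lemma~\ref{lem-isom-hom-tensor}, so the underlying ideas coincide.

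The organization, however, differs. The paper inserts an extra intermediate row and splits into the cases $e'\ge e$ and $e'<e$; this is forced because its intermediate objects $\NCB(\cM_{e'}/X_e)$ (resp.\ $\NCB(\cM_{e}/X_e)$ in the second case) only make sense under the corresponding inequality, and Lemma~\ref{lem-commutative} is invoked for two adjacent squares in each case. Your route avoids this entirely: by unwrapping each horizontal $F^e$ as (coforgetful)$\circ$(canonical iso) you need Lemma~\ref{lem-commutative} only once, with parameters $(e,e+e')$ where $e+e'\ge e$ always holds, so no case distinction arises. The remaining right-hand square is then a pure associativity check among the canonical equivalences of Lemma~\ref{lem-isom-hom-tensor}, exactly as you say. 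This is a slightly more economical presentation of the same content.
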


\begin{proof}
If $e' \ge e$, then from Lemmas \ref{lem-isom-hom-tensor} and  \ref{lem-commutative}, the diagram
\[
\xymatrix{ 
&  \NCB(\cM_{e+e'}/X) \ar[d] \ar@/^50pt/[ddd]\\
\NCB(\cM_{e+e'}/X_e) \ar[ur] \ar[d] \ar@/_40pt/[dd] & \NCB(\cM_{e'}/X) \ar[d] \\
\NCB(\cM_{e'}/X_e) \ar[ur] \ar[d] &  \NCB( \cM_e/X)\ar[d] \\
\NCB(\cM_e/X_e) \ar[ur]  & \NCB(\cM/X)
}
\]
is commutative.
Now the corollary follows from our definition of the Frobenius morphism.

If $e' < e$, then similarly the diagram
\[
\xymatrix{ 
&  \NCB(\cM_{e+e'}/X) \ar[d] \ar@/^50pt/[ddd]\\
\NCB(\cM_{e+e'}/X_e) \ar[ur] \ar[d]  \ar@/_40pt/[dd] & \NCB(\cM_{e}/X) \ar[d] \\
\NCB(\cM_{e}/X_e) \ar[ur] \ar[d] &  \NCB( \cM_{e'}/X)\ar[d] \\
\NCB(\cM_{e'}/X_e) \ar[ur]  & \NCB(\cM/X)
}
\]
is commutative and the corollary follows.

\end{proof}

\begin{cor}
For $e,e' \ge 0$, the diagram
\[
\xymatrix{
\NCB(\cM_{e+e'}/X_{e+e'})  \ar[r]^{F^{e'}} \ar[dr]_{F^{e+e'}} & \NCB(\cM_e/X_e) \ar[d]^{F^e} \\
\  & \NCB(\cM/X) 
}
\]
is commutative. Namely we have $F^{e+e'} = F^e \circ F^{e'}$.
In particular, the $e$-th Frobenius of $\NCB(\cM/X)$ is the $e$-iterate of the first Frobenius. 
\end{cor}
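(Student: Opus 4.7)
The plan is to unfold both $F^{e+e'}$ and the composite $F^{e}\circ F^{e'}$ into their defining chains of coforgetful morphisms and canonical equivalences (from Lemma~\ref{lem-isom-hom-tensor}) and then to identify the two chains by invoking the commutativity already established in Corollary~\ref{cor-commutative-frob}.

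First I observe that the inclusions $\cO_X \subset \cO_{X_e} \subset \cO_{X_{e+e'}}$ coming from the iterated absolute Frobenius give rise to the chain of subrings
\[
\cE_{\cM_{e+e'}/X_{e+e'}} \supset \cE_{\cM_{e+e'}/X_e}?\,\ldots
\]
(more precisely $\cE_{\cM_{e+e'}/X_{e+e'}} \subset \cE_{\cM_{e+e'}/X_e} \subset \cE_{\cM_{e+e'}/X}$, since a map linear for a larger subring is linear for a smaller one). The pull-back functor defining the coforgetful morphism is restriction of scalars, and restriction of scalars is associative along such a chain, so the coforgetful morphism $\NCB(\cM_{e+e'}/X_{e+e'}) \to \NCB(\cM_{e+e'}/X)$ factors through $\NCB(\cM_{e+e'}/X_e)$. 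Substituting this into the definition of $F^{e+e'}$ yields the factorization
\[
\NCB(\cM_{e+e'}/X_{e+e'}) \to \NCB(\cM_{e+e'}/X_e) \to \NCB(\cM_{e+e'}/X) \xrightarrow{\sim} \NCB(\cM/X),
\]
where the first two arrows are coforgetful and the third is the canonical equivalence from Lemma~\ref{lem-isom-hom-tensor}.

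Next I factor that canonical equivalence through $\NCB(\cM_{e'}/X)$ using Lemma~\ref{lem-isom-hom-tensor}(1). The tail of the above factorization, starting from $\NCB(\cM_{e+e'}/X_e)$, is then exactly the ``top-right then down'' composite in the square of Corollary~\ref{cor-commutative-frob}; by that corollary I may replace it with the ``down then right'' composite $\NCB(\cM_{e+e'}/X_e) \xrightarrow{\sim} \NCB(\cM_e/X_e) \to \NCB(\cM_e/X) \xrightarrow{\sim} \NCB(\cM/X)$. In the resulting five-term expression for $F^{e+e'}$, the first two arrows are by definition the Frobenius $F^{e'}$ of $\NCB(\cM_e/X_e)$ and the last three are the Frobenius $F^{e}$ of $\NCB(\cM/X)$. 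This proves $F^{e+e'} = F^e \circ F^{e'}$, and iterating the identity yields the concluding sentence of the statement.

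The only delicate point is the bookkeeping of the equivalences: one must verify that the direct equivalence $\NCB(\cM_{e+e'}/X)\xrightarrow{\sim}\NCB(\cM/X)$ really factors through $\NCB(\cM_{e'}/X)$ compatibly with the vertical arrows used in Corollary~\ref{cor-commutative-frob}. This amounts to the tensor-product compatibility $\cHom(\cM_{e'},\cM)\otimes_{\cE_{\cM_{e'}/X}}\cHom(\cM_{e+e'},\cM_{e'})\cong \cHom(\cM_{e+e'},\cM)$ supplied by Lemma~\ref{lem-isom-hom-tensor}(1), so the check is immediate; everything else is a straightforward diagram chase.
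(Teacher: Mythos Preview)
Your argument is correct and is essentially the same as the paper's: both factor the coforgetful morphism through $\NCB(\cM_{e+e'}/X_e)$, use the commuting square relating coforgetful morphisms and the canonical equivalences, and then read off $F^{e'}$ and $F^{e}$ from the two halves of the resulting staircase. The only cosmetic difference is that the paper routes the equivalences through $\NCB(\cM_e/X)$ (so the middle square is literally Lemma~\ref{lem-commutative}), whereas you route them through $\NCB(\cM_{e'}/X)$ and invoke Corollary~\ref{cor-commutative-frob}; also, after your replacement the chain has four arrows, so ``the last three'' should read ``the last two''.
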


\begin{proof}
This follows from the commutativity of the diagram
\[
\xymatrix{ 
&& \NCB(\cM_{e+e'}/X) \ar[d] \\
& \NCB(\cM_{e+e'}/X_e) \ar[ur] \ar[d] &  \NCB(\cM_e/X) \ar[d]  \\
\NCB(\cM_{e+e'}/X_{e+e'}) \ar[ur]  & \NCB(\cM_e/X_e) \ar[ur] & \NCB(\cM/X) .
}
\]
\end{proof}

\begin{prop}
Let $f : Y \to X$  be a finite dominant morphism with $Y$ is regular. 
Then the diagram 
\[
\xymatrix{
Y_e \ar[r]^{F^e} \ar[d]_{\textnormal{cofor.}}  & Y \ar[d]^{\textnormal{cofor.}} \\
\NCB(Y_e/X_e) \ar[r]_{F^e} & \NCB(Y/X) 
}
\]
is commutative. 
\end{prop}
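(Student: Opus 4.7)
My plan is to verify commutativity in $\PS$ by constructing a natural isomorphism between the pullback functors $\Qcoh(\cE_{Y/X}) \to \Qcoh(\cO_{Y_e})$ of the two compositions, compatible with the distinguished objects. Unwinding the definitions, the top-right path $Y_e \xrightarrow{F^e} Y \xrightarrow{\textnormal{cofor.}} \NCB(Y/X)$ has pullback functor
\[
F_1(\cN) = \cO_{Y_e} \otimes_{\cO_Y} \cN,
\]
where $\cN$ is first restricted along the subring $\cO_Y \subset \cE_{Y/X}$ and then tensored with $\cO_{Y_e}$ over $\cO_Y$ via the Frobenius. The left-bottom path $Y_e \xrightarrow{\textnormal{cofor.}} \NCB(Y_e/X_e) \xrightarrow{F^e} \NCB(Y/X)$ has pullback functor
\[
F_2(\cN) = \cHom_{\cO_X}(\cO_Y, \cO_{Y_e}) \otimes_{\cE_{Y/X}} \cN,
\]
where the $\cO_{Y_e}$-module structure comes from the chain of subrings $\cO_{Y_e} \subset \cE_{Y_e/X_e} \subset \cE_{Y_e/X}$ acting on the $\cHom$ by post-composition.

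The crux will be to exhibit an isomorphism of $(\cO_{Y_e},\cE_{Y/X})$-bimodules
\[
\alpha : \cO_{Y_e} \otimes_{\cO_Y} \cE_{Y/X} \xrightarrow{\sim} \cHom_{\cO_X}(\cO_Y, \cO_{Y_e}), \quad a \otimes \psi \mapsto \bigl[b \mapsto a\cdot\psi(b)\bigr].
\]
Granted $\alpha$, tensoring with $\cN$ over $\cE_{Y/X}$ yields the desired natural isomorphism $F_1(\cN) \cong F_2(\cN)$. At the distinguished object $\cN = \cO_Y$, both sides identify with $\cO_{Y_e}$ (the right-hand side via Lemma \ref{lem-isom-hom-tensor}(2)), and I would verify that $\alpha \otimes \id$ implements the identity under these identifications, thereby matching the distinguished objects of $Y_e$.

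The main obstacle will be showing that $\alpha$ itself is an isomorphism, for which I plan to follow the pattern of the proof of Lemma \ref{lem-commutative}. Both source and target are reflexive coherent $\cO_X$-modules: the target is reflexive since $\cO_{Y_e}$ is reflexive, so $\cHom_{\cO_X}(\cO_Y,\cO_{Y_e})$ is reflexive by \cite[Prop.\ 1.6]{MR597077}; the source is flat as a right $\cE_{Y/X}$-module via base change along the flat map $\cO_Y \to \cO_{Y_e}$ (flatness by Kunz, since $Y$ is regular), hence locally a direct summand of $\cE_{Y/X}^{\oplus l}$, and therefore also reflexive. Over the regular locus $X^{\textnormal{reg}} \subset X$, whose complement has codimension $\ge 2$ by normality of $X$, Auslander-Buchsbaum forces $\cO_Y$ to be locally free over $\cO_X$ (regularity of $Y$ makes $\cO_Y$ maximal Cohen-Macaulay over $\cO_X$ there). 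On such a trivialising open, $\cHom_{\cO_X}(\cO_Y,-) \cong \cO_Y^{\vee} \otimes_{\cO_X} -$ and both sides of $\alpha$ become $\cO_Y^{\vee} \otimes_{\cO_X} \cO_{Y_e}$, under which $\alpha$ visibly reduces to the identity. Reflexivity then propagates the isomorphism to all of $X$, completing the proof.
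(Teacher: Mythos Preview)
Your argument is correct and matches the paper's: both reduce commutativity to showing that the canonical bimodule map $\alpha : \cO_{Y_e} \otimes_{\cO_Y} \cE_{Y/X} \to \cHom_{\cO_X}(\cO_Y, \cO_{Y_e})$ is an isomorphism. The paper's justification is shorter, though: since you already invoke (via Kunz) that $\cO_{Y_e}$ is locally free over $\cO_Y$, locally $\alpha$ is just the obvious isomorphism $\cHom_{\cO_X}(\cO_Y,\cO_Y)^{\oplus r} \xrightarrow{\sim} \cHom_{\cO_X}(\cO_Y,\cO_Y^{\oplus r})$, so your reflexivity/codimension-two detour is unnecessary.
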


\begin{proof}
Because $\cO_{Y_e}$ is a locally free $\cO_Y$-module,
the canonical map 
\[
 \cO_{Y_e} \otimes_{\cO_Y} \cE_{Y/X} \to \cHom_{\cO_X}(\cO_Y, \cO_{Y_e}) .
\]
is an isomorphism, which proves the proposition. 
\end{proof}

\section{D-blowups}\label{sec-DB}

Among NC blowups, especially interesting are
the ones associated to Frobenius morphisms of schemes. 

\begin{defn}
For an integral scheme $X$,  
we define the \emph{$e$-th $D$-blowup} of $X$ as
$\DB_e(X):= \NCB(X_e/X)$.
\end{defn}

\begin{rem}
The $D$-blowup can be regarded as the noncommutative counterpart of the F-blowup (see \cite{Yasuda:arXiv:0810.1804}).
\end{rem}

\begin{defn}[Hochster--Roberts \cite{MR0417172}]
Let $X=\Spec R$ be an integral scheme. 
We say that $R$ and $X$ are \emph{F-pure} if $R \hookrightarrow R_e$
splits as an $R$-module map.
\end{defn}
 
\begin{defn}[Smith--Van den Bergh \cite{MR1444312}]
Suppose that $R$ is a complete local Noetherian domain so that the Krull-Schmidt
decomposition holds for finitely generated $R$-modules.
Then $R$ and $\Spec R$ are said to be \emph{FFRT (finite F-representation type)}
if there are finitely many indecomposable $R$-modules $M_i$, $i=1, \dots, n$,
such that for any $e$, $R_e$ is isomorphic to 
$ \bigoplus_{i=1}^n M_i ^{\oplus r_i}$, $r_i \ge0$, as an $R$-module. 
 \end{defn}
 
\begin{prop}
Let $R$ be a complete local Noetherian normal domain.
Suppose that $X=\Spec R$ is F-pure and FFRT.
Then for sufficiently large $e$, the Frobenius morphism 
$F^e_X: X_e \to X$ is F-steady.
\end{prop}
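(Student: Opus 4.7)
The plan is to track the indecomposable decomposition of $R_e$ as an $R$-module and show that it stabilizes for large $e$. By the FFRT hypothesis, fix a finite family of indecomposable $R$-modules $M_1,\dots,M_n$ such that for every $e\ge 0$ one has $R_e\cong\bigoplus_{i=1}^{n} M_i^{\oplus r_i(e)}$ with $r_i(e)\ge 0$; let $S(e):=\{i:r_i(e)>0\}$ denote the support. Since $R_e$ and $R_{e+e'}$ are equivalent $R$-modules exactly when $S(e)=S(e+e')$, proving that $F^{e}_X$ is F-steady reduces to showing that the sequence $S(\cdot)$ is eventually constant.

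The main observation is that $S(e)\subseteq S(e+1)$ for every $e$. To establish this, I would first note that each $R_e$ is itself F-pure: the ring $R_e=R^{1/p^e}$ is isomorphic as a ring to $R$ (via $x\mapsto x^{1/p^e}$), and F-purity is a purely ring-theoretic notion, hence preserved under ring isomorphism. Therefore the inclusion $R_e\hookrightarrow R_{e+1}$, which is the first Frobenius of $R_e$, splits as an $R_e$-module map and \emph{a fortiori} as an $R$-module map. This gives a direct sum decomposition $R_{e+1}\cong R_e\oplus N_e$ of $R$-modules. Since every indecomposable summand of $R_{e+1}$ lies in the fixed list $\{M_1,\dots,M_n\}$ by FFRT, the Krull--Schmidt theorem (which applies because $R$ is complete local Noetherian) forces $N_e$ to decompose into $M_i$'s as well, and comparison of decompositions yields $r_i(e+1)\ge r_i(e)$ for every $i$. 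Hence $S(e)\subseteq S(e+1)$.

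Since there are only finitely many subsets of $\{1,\dots,n\}$, the nondecreasing chain $S(0)\subseteq S(1)\subseteq\cdots$ stabilizes at some $e_0$, and then $S(e)=S(e+e')$ for all $e\ge e_0$ and $e'\ge 0$. This means $R_e$ and $R_{e+e'}$ share exactly the same indecomposable summands with positive multiplicities, i.e., they are equivalent $R$-modules, which is precisely the F-steady condition for $F^{e}_X:X_e\to X$. The step that warrants the most care is the transfer of F-purity from $R$ to $R_e$; once that is granted, the remainder is a clean pigeonhole argument using the finiteness built into FFRT.
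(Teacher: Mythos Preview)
Your proof is correct and follows the same approach as the paper: track the indecomposable summands of $R_e$ using the FFRT hypothesis and find an $e_0$ beyond which the support set stabilizes. The paper's proof simply asserts the existence of such an $e_0$ (for the irredundant list of $M_i$), whereas you supply the missing justification---namely, that F-purity forces $R_e$ to be a direct summand of $R_{e+1}$, giving the monotonicity $S(e)\subseteq S(e+1)$---so your argument is a fleshed-out version of the same idea.
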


\begin{proof}
Let $M_i$, $i=1,\dots,n$, be the \emph{irredundant} set of indecomposable modules as in the 
above definition. Then there exists $e_0$ such that for every  $e \ge e_0$,
$R_e$ is isomorphic, as an $R$-module, to $\bigoplus_{i=1}^n M_i ^{\oplus r_i}$, $r_i >0$.  
Hence $X_e \to X$ is F-steady.
\end{proof}

As a corollary, we obtain the following. 

\begin{cor}
Let $X$ be an integral normal scheme with F-pure and FFRT singularities.
Namely the completion of every local ring of $X$ is F-pure and FFRT.
Then for sufficiently large $e$, $\DB_e(X)$ is Kunz regular. 
\end{cor}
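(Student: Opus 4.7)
The plan is to reduce Kunz regularity of $\DB_e(X)=\NCB(X_e/X)$ to a pointwise statement, apply the preceding proposition at every completed stalk, and then promote the pointwise $e$'s to a uniform bound by a Noetherian argument.

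First I would unwind the definition. By the construction in Section~\ref{sec-Frobenius}, $\DB_e(X)$ is Kunz regular iff $\cO_{X_e}$ is F-steady as an $\cO_X$-module, i.e.\ iff $\cO_{X_e}$ and $(\cO_{X_e})_{e'}$ are equivalent $\cO_X$-modules for every $e'\ge 0$. A direct computation with the absolute Frobenius push-forward shows $(\cO_{X_e})_{e'}\cong \cO_{X_{e+e'}}$ as $\cO_X$-modules: locally, if $X=\Spec R$, then $F^{e'}_{\mathrm{abs},*}(R^{1/p^e})\cong R^{1/p^{e+e'}}$ via $m\mapsto m^{1/p^{e'}}$. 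So the condition becomes: at every $x\in X$, writing $\hat R_x:=\hat\cO_{X,x}$, the $\hat R_x$-modules $\hat R_x^{1/p^e}$ and $\hat R_x^{1/p^{e+e'}}$ are equivalent for every $e'\ge 0$.

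By hypothesis each $\hat R_x$ is F-pure and FFRT, so the preceding proposition supplies an integer $e_0(x)$ beyond which this pointwise condition holds at $x$. What remains is to find a uniform $e_0$ dominating every $e_0(x)$. I plan to achieve this by Noetherian induction on closed subsets of $X$. Let $Z_e\subset X$ denote the locus where the condition fails at level $e$; I would show that each $Z_e$ is Zariski-closed in $X$ and that $Z_0\supset Z_1\supset\cdots$, so the chain stabilizes on the Noetherian space $X$. Applying the pointwise statement at the generic point of each irreducible component of the stable intersection $\bigcap_e Z_e$ then forces this intersection to be empty, and any $e$ with $Z_e=\emptyset$ is the desired uniform bound.

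The main obstacle is establishing the closedness of $Z_e$, together with the spreading-out needed to propagate F-steadiness from a generic point to a Zariski open neighborhood. Concretely, this amounts to showing that the Krull--Schmidt decomposition of $\hat R_x^{1/p^e}$ varies in a suitably constructible fashion in $x$, so that stability of the decomposition at a generic point forces stability on a dense open. This semicontinuity of indecomposable summands of $\hat R_x^{1/p^e}$ is the technical heart of the argument and the bridge between the pointwise FFRT input of the preceding proposition and the global uniform conclusion of the corollary.
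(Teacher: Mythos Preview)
The paper offers no proof of this corollary at all; it simply writes ``As a corollary, we obtain the following'' and states the result. So there is no argument in the paper to compare against beyond the implicit claim that the statement is immediate from the preceding proposition.

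You are right to notice that the passage from the proposition to the corollary is not literally immediate: the proposition produces, for each point $x\in X$, an integer $e_0(x)$ such that $\hat\cO_{X,x}^{1/p^e}$ has stable Krull--Schmidt support for $e\ge e_0(x)$, whereas the corollary requires a single $e$ working at every point simultaneously. The paper does not address this uniformity, and your reformulation of F-steadiness of $X_e\to X$ as the pointwise equivalence of $\hat\cO_{X,x}^{1/p^e}$ and $\hat\cO_{X,x}^{1/p^{e+e'}}$ for all $x$ and all $e'$ is correct.

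That said, your proposal does not close the gap either. The Noetherian-induction strategy hinges entirely on the closedness (or at least constructibility) of the loci $Z_e$, and you explicitly flag this as the ``main obstacle'' without proving it. Showing that the set of points where two coherent sheaves have the same indecomposable summands after completion is Zariski closed is a nontrivial statement about how Krull--Schmidt decompositions vary in families; it is not a standard semicontinuity result, and you give no argument for it. Without this, the descending-chain argument cannot start. So as written, your proposal identifies a genuine subtlety the paper glosses over, but replaces it with another unproved assertion of roughly the same difficulty.
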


\begin{ex}
Normal toric singularities and tame quotient singularities 
are F-pure and FFRT. See \cite{MR2407232} for other examples. 
\end{ex}

\section{Comparing Frobenius morphisms of commutative and noncommutative blowups}\label{sec-compare}

Let $X=\Spec R$ be an integral normal affine scheme
and $M$ a finitely generated reflexive  $R$-module such that
the associated sheaf $\tilde M$  is F-steady.  
Let $g: Z \to X$ be a modification which is a flattening of $\tilde M$, that is,
$\cM := g^*  \tilde M /tors$ is locally free. 

\begin{lem}
We have $\Gamma( \cM)=M$.
\end{lem}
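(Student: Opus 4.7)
The plan is to sandwich $\Gamma(Z,\cM)$ between two copies of $M$ via restriction to a suitable big open subset of $X$, using the reflexivity hypothesis. First I would locate a big open $V\subset X$ (meaning $X\setminus V$ has codimension $\ge 2$) such that $g$ restricts to an isomorphism $g^{-1}(V)\xrightarrow{\sim} V$ and $\tilde M|_V$ is locally free. The locus where $\tilde M$ is locally free is big because $M$ is reflexive and $X$ is normal; the locus where $g$ is an isomorphism is big because a proper birational morphism onto a normal scheme is necessarily an isomorphism above every codimension-one point (this is the standard consequence of Zariski's main theorem over a DVR). The intersection of these two opens is then still big.

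Over $g^{-1}(V)$, the pullback $g^{*}\tilde M$ is already locally free, so the torsion quotient is trivial there and we get a canonical identification $\cM|_{g^{-1}(V)}\cong \tilde M|_V$. Since $\cM$ is locally free on $Z$, in particular torsion-free, and $Z$ is integral (as a modification of $X$), restriction to the nonempty open $g^{-1}(V)$ is injective, giving
\[
\Gamma(Z,\cM)\hookrightarrow \Gamma(g^{-1}(V),\cM)\cong \Gamma(V,\tilde M)=\Gamma(X,\tilde M)=M,
\]
where the penultimate equality uses that $\tilde M$ is reflexive on the normal scheme $X$ and $V$ is big (so $\tilde M\cong j_{*}(\tilde M|_{V})$ for $j:V\hookrightarrow X$).

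For the reverse inclusion I would use the unit of adjunction $\tilde M\to g_{*}g^{*}\tilde M$ composed with $g^{*}\tilde M\twoheadrightarrow \cM$ to get a natural map $\tilde M\to g_{*}\cM$, and take global sections to produce $M\to \Gamma(Z,\cM)$. The composite $M\to \Gamma(Z,\cM)\hookrightarrow M$ is the identity, since on $V$ it is literally the identity of $\tilde M|_V$ and $M\to M$ is then determined by its restriction to $V$ by reflexivity. Hence the injection is surjective and $\Gamma(Z,\cM)=M$. The only nontrivial input is the bigness of the isomorphism locus of $g$, which I expect to be the main (but still standard) obstacle; note that the F-steady hypothesis on $\tilde M$ is not needed here.
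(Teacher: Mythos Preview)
Your argument is correct and follows essentially the same strategy as the paper: choose a big open where $\tilde M$ is locally free, invoke Zariski's main theorem, and use reflexivity of $\tilde M$ on the normal base to control sections. The only minor difference is that the paper does not shrink the open further to the locus where $g$ is an isomorphism; instead it uses $g_*\cO_Z=\cO_X$ together with the projection formula (valid since $\tilde M|_U$ is locally free) to obtain $(g_*\cM)|_U\cong\tilde M|_U$ directly, and from this deduces the slightly stronger sheaf isomorphism $g_*\cM\cong\tilde M$.
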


\begin{proof}
There exists an open subset $U \subset X$
such that $X \setminus U$ has codimension $\ge 2$ and $\tilde M$ is locally free on $U$.
Since $X$ is normal, from Zariski's main theorem, $(g_* \cM)|_U=g_*g^*(\tilde M|_U)=\tilde M|_U$.
It follows that
 the natural morphism $\tilde M \to g_* \cM$ is an injection into a torsion-free sheaf
which is an isomorphism over $U$. 
Since $\tilde M$ is reflexive, this is an isomorphism.
Therefore we have
\[
\Gamma(\cM) = \Gamma(g_* \cM) = \Gamma(\tilde M) =M.
\]
\end{proof}

Set   $E := \End_R(M)$ and $\cE := \cEnd_{\cO_Z} (\cM)$.
Then from the preceding lemma, $E = \Gamma(\cE)$.
Since $\cM$ is locally free,  the projection
\[
h : \NCB(\cM/Z) \to Z,
\]
which is defined by $ \cM \otimes_{\cO_Z} - $, is an isomorphism.

For $\cF \in \Qcoh(\cE) = \Qcoh(\NCB(\cM/Z))$,  $\Gamma(\cF)$ 
is a left $E$-module. Thus we have a left exact functor
\[
\Phi : \Qcoh(Z) \to E \lmodules, \ \cF \mapsto \Gamma(  h^*\cF) .
\]
Put $E_{e}:= \End_{R_{e}}(M_{e})$.
Similarly we have a functor
\[
\Phi_e : \Qcoh(Z_e) \to  E_{e}\lmodules.
\]

\begin{prop}
The diagram
\[
\xymatrix{
\Qcoh(Z) \ar[r]^\Phi \ar[d]_ {(F^e)^*}&  E \lmodules \ar[d]^{(F^e)^*} \\
\Qcoh(Z_e) \ar[r]_{\Phi_e} & E_{e}\lmodules
}
\]
is commutative up to isomorphism of functors.
\end{prop}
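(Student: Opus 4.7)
The plan is to unwind both compositions and construct a natural isomorphism using Lemma \ref{lem-isom-hom-tensor}(2) on the affine base $X$ together with the local freeness of $\cM$ on $Z$. For $\cF\in\Qcoh(Z)$, direct computation gives
\[
(F^e)^*\Phi(\cF) = \Hom_R(M,M_e)\otimes_E\Gamma(\cM\otimes_{\cO_Z}\cF),
\]
and, using $(F^e)^*\cF=\cF\otimes_{\cO_Z}\cO_{Z_e}$ together with the topological identification $Z_e=Z$,
\[
\Phi_e((F^e)^*\cF) = \Gamma\bigl(\cM_e\otimes_{\cO_{Z_e}}(F^e)^*\cF\bigr) = \Gamma(\cM_e\otimes_{\cO_Z}\cF).
\]
The task is thus to produce a natural isomorphism between these two $E_e$-modules. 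In the special case $\cF=\cO_Z$ this holds: both sides equal $M_e$, the left by Lemma \ref{lem-isom-hom-tensor}(2) applied on $X$ to the F-steady pair $(\tilde M,\tilde M_e)$ together with $\Gamma(\cM)=M$, and the right by the analogous identity $\Gamma(\cM_e)=M_e$.

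For general $\cF$, I would sheafify on $X$. Since $X=\Spec R$ is affine, tensoring over $E$ equals $\Gamma_X$ of the tensor over the coherent sheaf $\cE_{\tilde M/X}$; moreover the canonical map $g_*\cE_{\cM/Z}\to\cE_{\tilde M/X}$ is an isomorphism (both are quasi-coherent on $X$ with global sections $E$, using $g_*\cM=\tilde M$ from the preceding lemma). Hence $(F^e)^*\Phi(\cF)$ equals
\[
\Gamma_X\bigl(\cHom_{\cO_X}(\tilde M,\tilde M_e)\otimes_{\cE_{\tilde M/X}}g_*(\cM\otimes_{\cO_Z}\cF)\bigr),
\]
and the proposition reduces to the existence of a natural isomorphism
\[
\cHom_{\cO_X}(\tilde M,\tilde M_e)\otimes_{\cE_{\tilde M/X}}g_*(\cM\otimes_{\cO_Z}\cF) \;\cong\; g_*(\cM_e\otimes_{\cO_Z}\cF)
\]
of quasi-coherent $\cO_X$-modules. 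The natural candidate comes from the evaluation pairing underlying Lemma \ref{lem-isom-hom-tensor}(2); it is shown to be an isomorphism by restricting to affine opens of $Z$ on which $\cM$ is free, where both sides reduce via Morita equivalence to the already-verified case $\cF=\cO_Z$.

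The main obstacle is this last reduction. The subtlety is that although $\cM$ and $\cM_e$ are equivalent as $\cO_X$-modules by the F-steadiness assumption, they need not be equivalent as $\cO_Z$-modules, so Lemma \ref{lem-isom-hom-tensor}(2) cannot be applied directly on $Z$. The argument must therefore factor through the normal affine base $X$ as above, with the local freeness of $\cM$ on $Z$ ensuring that $g_*$ interacts compatibly with tensoring by arbitrary $\cF\in\Qcoh(Z)$.
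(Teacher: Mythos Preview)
Your unwinding of the two compositions is correct and matches the paper's. The gap is in the final step. You reduce to showing an isomorphism
\[
\cHom_{\cO_X}(\tilde M,\tilde M_e)\otimes_{\cE_{\tilde M/X}}g_*(\cM\otimes_{\cO_Z}\cF)\;\cong\;g_*(\cM_e\otimes_{\cO_Z}\cF)
\]
of sheaves on $X$, and then propose to verify it ``by restricting to affine opens of $Z$ on which $\cM$ is free''. But this map lives on $X$, not on $Z$: both sides involve $g_*$, and restricting to an affine open $U\subset Z$ only sees $\Gamma_U$, not $\Gamma_Z$, so the local verification you describe does not assemble into the global statement. Your base case $\cF=\cO_Z$ relied on the \emph{global} identity $\Gamma(\cM)=M$, which is lost after restricting to $U$. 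The vague clause about $g_*$ ``interacting compatibly with tensoring by arbitrary $\cF$'' is exactly the point that needs proof, and it is not automatic.

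The paper resolves this by reversing the order of operations. It first proves, for any $\cG\in\Qcoh(\cE)$, a natural isomorphism
\[
\Hom_R(M,M_e)\otimes_E\Gamma(\cG)\;\cong\;\Gamma\bigl(\cHom_{\cO_Z}(\cM,\cM_e)\otimes_{\cE}\cG\bigr),
\]
and \emph{this} is what is local on $X$: both sides are $R$-modules, and after completing at a point of $X$ the F-steadiness makes $M_e$ a direct summand of some $M^{\oplus n}$, so the natural map becomes a direct summand of the identity on $\Gamma(\cG)^{\oplus n}$. Applying this with $\cG=\cM\otimes_{\cO_Z}\cF$, the remaining step is $\cHom_{\cO_Z}(\cM,\cM_e)\otimes_{\cE}\cM\cong\cM_e$, which is purely local on $Z$ and holds because $\cM$ is locally free: Morita equivalence gives $\cHom_{\cO_Z}(\cM,\cN)\otimes_{\cE}\cM\cong\cN$ for \emph{every} $\cO_Z$-module $\cN$. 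So the obstacle you flagged---that $\cM$ and $\cM_e$ need not be equivalent on $Z$---is not actually an obstruction; local freeness of $\cM$ bypasses the equivalence hypothesis of Lemma~\ref{lem-isom-hom-tensor} there. The genuine issue is commuting $\Hom_R(M,M_e)\otimes_E-$ past $\Gamma$, and that is where F-steadiness on $X$ is used.
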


\begin{proof}
We claim that for $\cF \in \Qcoh(\cE)$,  there exists a natural isomorphism
\[
\Hom_R(M,M_e)  \otimes_E \Gamma( \cF ) 
 \cong \Gamma(  \cHom_{\cO_Z}(\cM,\cM_e) \otimes_{\cE}  \cF ) .
\]
Obviously there exists a natural morphism from the left-hand side to the right-hand side.
Since the claim is local on $X$, to show this, we may suppose that $R$
is a complete local ring. 
Then the claim easily follows from the definition of equivalent modules. 

We have natural isomorphisms
\begin{align*}
&((F^e)^* \circ \Phi) (\cF) &  \\
&=   \Hom_R(M,M_e)  \otimes_E \Gamma( \cM \otimes_{\cO_Z} \cF ) &\\
 &\cong \Gamma(  \cHom_{\cO_Z}(\cM,\cM_e) \otimes_{\cE}  \cM \otimes_{\cO_Z} \cF ) &\\
 &\cong \Gamma(  \cM_e \otimes_{\cO_Z} \cF )  & \quad \text{(Lemma \ref{lem-isom-hom-tensor})}\\
 &\cong \Gamma(  \cM_e \otimes_{\cO_{Z_e}} \cO_{Z_e} \otimes_{\cO_Z} \cF ) & \\
& \cong \Gamma(  \cM_e \otimes_{\cO_{Z_e}} (F^e)^*  \cF ) &\\
& \cong ( \Phi_e \circ (F^e)^* )( \cF). &  
\end{align*}
Thus the proposition holds. 
\end{proof}

We have the right derived functor  of $\Phi$
\[
\bR \Phi : D^+ (\Qcoh(Z)) \to D^+(E\lmodules).
\]
Similarly for $\Phi_e$.

\begin{cor}
The diagram
\[
\xymatrix{
D^+(\Qcoh(Z)) \ar[r]^(.4){\bR \Phi} \ar[d]_ {(F^e)^*}& D^+(E\lmodules) \ar[d]^{(F^e)^*} \\
D^+(\Qcoh(Z_e)) \ar[r]_(.4){\bR \Phi_e} & D^+(E_{e}\lmodules)
}
\]
is commutative up to isomorphism of functors.
\end{cor}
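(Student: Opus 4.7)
The plan is to deduce this from the preceding proposition by showing that both vertical functors labelled $(F^e)^*$ are already exact, so taking right derived functors costs nothing and we can transport the non-derived commutativity degree by degree.

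First, I would handle the sheaf-side vertical arrow. The Frobenius $F^e : Z_e \to Z$ is a $k$-linear recasting of the absolute $e$-th Frobenius of $Z$; on abstract (non-$k$) schemes it is literally the identity. Consequently, under the obvious identification of the underlying abelian categories, $(F^e)^* : \Qcoh(Z) \to \Qcoh(Z_e)$ is the identity functor, in particular exact. Moreover its right adjoint $(F^e)_*$ is exact (the morphism $F^e$ is finite, hence affine), so $(F^e)^*$ preserves injectives.

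Next, I would handle the module-side vertical arrow. By construction, $(F^e)^*$ on $E\lmodules$ is the functor $\Hom_R(M,M_e)\otimes_E -$. Here the F-steadiness of $\tilde M$ enters: it says exactly that $M$ and $M_e$ are equivalent $R$-modules (after completion at each point), so Lemma \ref{lem-isom-hom-tensor} applies and produces the inverse functor $\Hom_R(M_e,M)\otimes_{E_e}-$. Thus $(F^e)^*$ is an equivalence of abelian categories $E\lmodules \xrightarrow{\sim} E_e\lmodules$, and in particular exact and injective-preserving. This is the key place where the geometric hypothesis is used.

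Having these two exactness facts, the corollary is formal: pick an injective resolution $\cF \to \cI^\bullet$ in $\Qcoh(Z)$; then $(F^e)^*\cI^\bullet$ is an injective resolution of $(F^e)^*\cF$ in $\Qcoh(Z_e)$, and the preceding proposition applied termwise yields a natural isomorphism of complexes $(F^e)^*\Phi(\cI^\bullet) \cong \Phi_e\bigl((F^e)^*\cI^\bullet\bigr)$, i.e.\ $(F^e)^*\bR\Phi(\cF)\cong \bR\Phi_e\bigl((F^e)^*\cF\bigr)$, naturally in $\cF$. The only nonformal step is the exactness on the module side; once the Morita-style Lemma \ref{lem-isom-hom-tensor} is invoked, the rest of the argument is bookkeeping.
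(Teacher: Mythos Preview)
Your argument is correct and is essentially the paper's own proof, made explicit: the paper writes the one-line chain $(F^e)^* \circ \bR\Phi \cong \bR((F^e)^* \circ \Phi) \cong \bR(\Phi_e \circ (F^e)^*) \cong \bR\Phi_e \circ (F^e)^*$, which silently uses exactly the two exactness facts you spell out (module-side $(F^e)^*$ is an equivalence via F-steadiness and Lemma~\ref{lem-isom-hom-tensor}; sheaf-side $(F^e)^*$ is exact and injective-preserving). Your version simply unpacks the justification for each isomorphism in that chain.
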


\begin{proof}
We have 
\[
 (F^e)^* \circ \bR \Phi \cong \bR((F^e)^* \circ \Phi) \cong \bR(\Phi \circ (F^e)^*)
 \cong (\bR\Phi )  \circ (F^e)^*.
\]
\end{proof}

The functor $\bR \Phi$ maps $D^b (\Coh(Z))$ into  $D^b (E\lmodules^{\mathrm{fg}})$.
Here $\Coh (Z) $ denotes the category of coherent sheaves
and $E\lmodules^{\mathrm{fg}}$ that of finitely generated left $E$-modules.
As shown in \cite{MR740077,MR1752785,MR1824990,MR2077594}, 
in some situations, the functor 
\[
\bR \Phi : D^b (\Coh(Z)) \to D^b(E\lmodules^{\mathrm{fg}})
\]
is an equivalence, a kind of Fourier-Mukai transform. Then through this equivalence, 
the Frobenius morphisms on both hand sides correspond to each other
at the level of derived category.

\begin{ex}
Let $G \subset SL_d(k)$ be a small finite subgroup of order prime to $p$ with $d=2,3$.
Set
$R : =k[x_1,\dots,x_d]^G$ and $X:= \Spec R$. 
Let $Y$ be either $\AA^d_k$ or $X_e$ for $e \gg 0$,
and let $Z$ be the universal flattening of $Y \to X$, which is 
isomorphic to the $G$-Hilbert scheme of Ito--Nakamura \cite{MR1420598}
(for the case $Y=X_e$, see \cite{Yasuda:arXiv:0810.1804,Yasuda:math0706.2700}).
If we put $M$ to be the coordinate ring of $Y$, then   the above functor is an equivalence (for instance, see \cite{Yasuda:arXiv:0810.1804,MR2077594}). 
\end{ex}

\bibliographystyle{plain}
\bibliography{/Users/highernash/mathematics/mybib.bib}

\begin{thebibliography}{10}

\bibitem{MR1304753}
M.~Artin and J.~J. Zhang.
\newblock Noncommutative projective schemes.
\newblock {\em Adv. Math.}, 109(2):228--287, 1994.

\bibitem{MR1957019}
A.~Bondal and D.~Orlov.
\newblock Derived categories of coherent sheaves.
\newblock In {\em Proceedings of the {I}nternational {C}ongress of
  {M}athematicians, {V}ol. {II} ({B}eijing, 2002)}, pages 47--56, Beijing,
  2002. Higher Ed. Press.

\bibitem{MR1824990}
Tom Bridgeland, Alastair King, and Miles Reid.
\newblock The {M}c{K}ay correspondence as an equivalence of derived categories.
\newblock {\em J. Amer. Math. Soc.}, 14(3):535--554 (electronic), 2001.

\bibitem{MR1423020}
A.~J. de~Jong.
\newblock Smoothness, semi-stability and alterations.
\newblock {\em Inst. Hautes \'Etudes Sci. Publ. Math.}, (83):51--93, 1996.

\bibitem{MR0232821}
Pierre Gabriel.
\newblock Des cat\'egories ab\'eliennes.
\newblock {\em Bull. Soc. Math. France}, 90:323--448, 1962.

\bibitem{MR740077}
G.~Gonzalez-Sprinberg and J.-L. Verdier.
\newblock Construction g\'eom\'etrique de la correspondance de {M}c{K}ay.
\newblock {\em Ann. Sci. \'Ecole Norm. Sup. (4)}, 16(3):409--449 (1984), 1983.

\bibitem{MR597077}
Robin Hartshorne.
\newblock Stable reflexive sheaves.
\newblock {\em Math. Ann.}, 254(2):121--176, 1980.

\bibitem{MR0417172}
Melvin Hochster and Joel~L. Roberts.
\newblock The purity of the {F}robenius and local cohomology.
\newblock {\em Advances in Math.}, 21(2):117--172, 1976.

\bibitem{MR1420598}
Yukari Ito and Iku Nakamura.
\newblock Mc{K}ay correspondence and {H}ilbert schemes.
\newblock {\em Proc. Japan Acad. Ser. A Math. Sci.}, 72(7):135--138, 1996.

\bibitem{MR1752785}
M.~Kapranov and E.~Vasserot.
\newblock Kleinian singularities, derived categories and {H}all algebras.
\newblock {\em Math. Ann.}, 316(3):565--576, 2000.

\bibitem{MR0252389}
Ernst Kunz.
\newblock Characterizations of regular local rings for characteristic {$p$}.
\newblock {\em Amer. J. Math.}, 91:772--784, 1969.

\bibitem{MR1622759}
Alexander~L. Rosenberg.
\newblock Noncommutative schemes.
\newblock {\em Compositio Math.}, 112(1):93--125, 1998.

\bibitem{MR1444312}
Karen~E. Smith and Michel Van~den Bergh.
\newblock Simplicity of rings of differential operators in prime
  characteristic.
\newblock {\em Proc. London Math. Soc. (3)}, 75(1):32--62, 1997.

\bibitem{MR2407232}
Shunsuke Takagi and Ryo Takahashi.
\newblock {$D$}-modules over rings with finite {$F$}-representation type.
\newblock {\em Math. Res. Lett.}, 15(3):563--581, 2008.

\bibitem{Yasuda:arXiv:0810.1804}
Yukinobu Toda and Takehiko Yasuda.
\newblock Noncommutative resolution, {F}-blowups and {$D$}-modules.
\newblock arXiv:0810.1804, to appear in Adv. Math.

\bibitem{MR2077594}
Michel Van~den Bergh.
\newblock Non-commutative crepant resolutions.
\newblock In {\em The legacy of Niels Henrik Abel}, pages 749--770. Springer,
  Berlin, 2004.

\bibitem{Yasuda:math0706.2700}
Takehiko Yasuda.
\newblock Universal flattening of {F}robenius.
\newblock arXiv:0706.2700v4.

\end{thebibliography}

\end{document}